\documentclass[letterpaper, 10 pt, conference]{ieeeconf}
%

\IEEEoverridecommandlockouts                              
\overrideIEEEmargins                                      
\usepackage{amssymb,latexsym,color,amsmath,pifont,graphicx, cite}
\usepackage{subfigure}  
\usepackage{float}
\usepackage[colorlinks=true, allcolors=blue]{hyperref}
\usepackage[nameinlink]{cleveref}
\usepackage{lipsum}
\usepackage{accents}



\newcommand{\widebar}[1]{\bar{#1}}


\DeclareMathOperator*{\minimize}{minimize}

\DeclareMathOperator*{\subjectto}{subject~to}

\newcommand{\reals}[1]{\mathbb{R}^{#1}}
\newcommand{\cN}{\mathcal{N}}
\DeclareMathOperator{\re}{Re}

\DeclareMathOperator{\diag}{\mathrm{diag}}

\newcommand{\norm}[1]{\| #1 \|}    
\newcommand{\inner}[2]{\langle #1,#2\rangle}    

\newcommand{\set}[1]{\{ #1 \}}
\newcommand{\enma}[1]{\ensuremath{#1}}

\newcommand{\non}{\nonumber}

\newcommand{\beq}{\begin{equation}}
\newcommand{\eeq}{\end{equation}}
\newcommand{\ba}{\begin{array}}
\newcommand{\ea}{\end{array}}
\newcommand{\bseq}{\begin{subequations}}
\newcommand{\eseq}{\end{subequations}}

\newcommand{\DefinedAs}[0]{\mathrel{\mathop:}=}
\newcommand{\AsDefined}[0]{=\mathrel{\mathop:}}

\newcommand{\matbegin}{\left[}
\newcommand{\matend}{\right]}


\newcommand{\tbo}[2]{
		\matbegin \begin{array}{c}
				#1 \\ #2
			\end{array} \matend }

\newcommand{\obt}[2]{
	\matbegin \begin{array}{cc}
		#1 & #2
	\end{array} \matend }

\newcommand{\tbt}[4]{
		\matbegin \begin{array}{cc}
				#1 & #2 \\ #3 & #4
			\end{array} \matend }


\newcommand{\ww}{w}

\newcommand{\yy}{y}

\newcommand{\sig}{\sigma}

\newcommand{\xs}{x^\star}
\newcommand{\ws}{\ww^\star}

\newcommand{\dds}{\delta^\star}


\newcommand{\xb}{\widebar{x}}

\newcommand{\hb}{\widebar{h}}
\newcommand{\gb}{\widebar{g}}





\newcommand{\xh}{\widehat{x}}

\newcommand{\yh}{\widehat{\yy}}

\newcommand{\gradh}{\widehat{\nabla f}}
\newcommand{\eh}{\widehat{e}}







\newcommand{\cZ}{\enma{\mathcal Z}}





\newcommand{\cK}{\enma{\mathcal K}}

\newcommand{\cB}{\enma{\mathcal B}}
\newcommand{\cH}{\enma{\mathcal H}}

\newcommand{\cA}{\enma{\mathcal A}}
\newcommand{\cR}{\enma{\mathcal R}}

\newcommand{\sigu}{\widebar{\sigma}}
\newcommand{\sigl}{\underaccent{\bar}{\sigma}}

\newcommand{\cHb}{\enma\widebar{\mathcal H}}
\newcommand{\Deltab}{\enma\widebar{\Delta}}

\newcommand{\bD}{\enma{\mathbb D}}

\newcommand{\hardy}{\enma{\mathcal H}_\infty}
\newcommand{\Rhardy}{\enma{\mathcal{RH}}_\infty}

\newtheorem{theorem}{Theorem}

\newtheorem{lemma}{Lemma}
\newtheorem{remark}{Remark}
\newtheorem{assumption}{Assumption}

\newtheorem{proposition}{Proposition}

\usepackage{empheq}
\usepackage{xcolor}

\newcommand{\vsp}{\vspace*{0.01cm}} 

\begin{document}
%
\title{\bf Automated algorithm design for convex optimization \\ 
problems with linear equality constraints}

\author{Ibrahim~K.\ Ozaslan, Wuwei Wu, Jie Chen, Tryphon T.\ Georgiou, and Mihailo~R.~Jovanovi\'{c}
	\thanks{I.\ K.\ Ozaslan and M.\ R.\ Jovanovi\'{c} are with the Ming Hsieh Department of Electrical and Computer Engineering, University of Southern California, Los Angeles, CA 90089. 
	W.\ Wu and J.\ Chen are with the Department of Electrical Engineering, City University of Hong Kong, Hong Kong SAR, China.	
	T.\ T.\ Georgiou is with the Department of Mechanical and Aerospace Engineering University of California, Irvine, CA 92697. W.\ Wu's and J.\ Chen's work are supported in part by Hong Kong RGC under the project CityU 11207823, CityU 11206024. T.\ T.\ Georgiou's work is supported in part by National Science Foundation (NSF) under grant ECCS-2347357, by Air Office of Scientific Research (AFOSR) under grant FA9550-24-1-0278, and by Army Research Office (ARO) under grant W911NF-22-1-0292.
		{E-mails: ozaslan@usc.edu, w.wu@my.cityu.edu.hk, jichen@cityu.edu.hk, tryphon@uci.edu, mihailo@usc.edu.}
	}
}

\maketitle

\begin{abstract}
Synthesis of optimization algorithms typically follows a {\em design-then-analyze\/} approach, which can obscure fundamental performance limits and hinder the systematic development of algorithms that operate near these limits. Recently, a framework grounded in robust control theory has emerged as a powerful tool for automating algorithm synthesis. By integrating design and analysis stages, fundamental performance bounds are revealed and synthesis of algorithms that achieve them is enabled. In this paper, we apply this framework to design algorithms for solving strongly convex optimization problems with linear equality constraints. Our approach yields a single-loop, gradient-based algorithm whose convergence rate is independent of the condition number of the constraint matrix. This improves upon the best known rate within the same algorithm class, which depends on the product of the condition numbers of the objective function and the constraint matrix.
\end{abstract}


\vspace*{0.15cm}

\begin{keywords}
	Automated algorithm synthesis, circle criterion, internal model principle, Nevanlinna-Pick interpolation, optimization, primal-dual methods, robust control.
\end{keywords}

	\vspace*{-1ex}
\section{Introduction}
	\label{sec.intro}

The design and analysis of optimization algorithms typically begins with selecting an appropriate algorithmic structure based on the problem's optimality conditions. This is followed by a convergence analysis, which often relies on creative and problem-specific reasoning. Such a {\em design-then-analyze\/} approach typically requires deriving tight analytical inequalities$-$an inherently challenging and non-systematic task. This difficulty contributes, in part, to the limited understanding of fundamental performance limits within specific algorithm classes.

\vsp

In his seminal work~\cite{nes03}, Nesterov established lower bounds on the worst-case convergence rates of gradient-based algorithms in the unconstrained setting and proposed algorithms that attain these bounds. Although the lower bounds are derived using an infinite-dimensional setup, the design and analysis of the corresponding algorithms rely on the estimate-sequence technique, which does not easily generalize to broader settings. Given the importance of understanding fundamental performance limits, substantial effort has been directed toward the study of accelerated optimization algorithms~\cite{suboycan16,wibwiljor16,hules17,shidujorsu21}. However, a closer examination reveals that many of these approaches start with a predefined algorithmic structure and use reverse engineering, offering limited insight into the underlying principles that govern optimal performance.

\vsp

Viewing optimization algorithms through the lens of dynamical systems and applying Lyapunov stability theory enables a system-theoretic understanding of their behavior. Although Lyapunov-based analysis is generally regarded as conservative, convergence rates matching known lower bounds can be achieved using Popov-type Lyapunov functions~\cite{muejor19, ozajovAUT25}. Extending such functions to descent-ascent algorithms for constrained optimization or minimax problems, however, remains a significant challenge. In these settings, alternative Lyapunov functions have been proposed~\cite{cornie19, quli18, ozapatjovARX24}, but their design often relies on problem-specific insight. A complementary approach involves fixing the structure of the Lyapunov function and the algorithm to search for valid certificates and algorithm parameters using computational tools~\cite{lesrecpac16, micschebe21}. While this method has the potential of obtaining sharp results~\cite{vanfrelyn17, schebe21}, it falls short of revealing fundamental performance limits due to the fixed forms.

\vsp

Recently, tools and concepts from robust control theory have been employed to demonstrate that Nesterov's lower complexity bound also holds in finite-dimensional settings~\cite{ugrpetsha23}. Building on this insight, a general framework has been developed for the design and analysis of centralized and distributed unconstrained optimization algorithms~\cite{zhawulichegeo24,wuzhalichegeo24,wuchejovgeo24}. This framework models optimization algorithms as transfer functions within a feedback system, derives analytic conditions to guarantee desirable algorithmic properties, and uses Nevanlinna-Pick interpolation to construct transfer functions that meet these conditions. In addition to enabling automated algorithm synthesis, this approach provides a means for identifying fundamental performance limits when algorithm specifications are expressed through necessary and sufficient conditions.

\vsp

In this paper, we build on the framework introduced in~\cite{zhawulichegeo24, wuzhalichegeo24, wuchejovgeo24} to design algorithms for smooth, strongly convex optimization problems subject to linear equality constraints. While such constraints may appear restrictive, they naturally arise in a range of applications, including consensus problems~\cite{boyparryusuh24} and incompressible fluid dynamics~\cite{tahgonsho23}.


\vsp

Among single-loop gradient-based algorithms for convex problems with linear equality constraints, Gradient Descent-Ascent (GDA)~\cite{algsay20} achieves a rate depends on the product of the condition numbers of the strongly convex objective function and the constraint matrix~\cite{algsay20,vansimles23}. Interestingly, with an appropriate stepsize selection, the continuous-time counterpart of GDA can attain a convergence rate that is independent of the condition number of the constraint matrix~\cite{ozajovCDC23}. Although continuous-time guarantees do not directly translate to discrete-time settings, this observation motivates the question of whether the worst-case convergence rate of a discrete-time gradient-based algorithm can be determined solely by the larger of the two condition numbers. In this paper, we take a step toward addressing this question. Leveraging the aforementioned framework, we design a single-loop gradient-based algorithm whose convergence rate depends only on the condition number of the objective function, provided this condition number is larger than that of the constraint matrix.


\vsp

The rest of the paper is organized as follows. In Section~\ref{sec.formulation}, we define the problem, introduce a control-theoretic formulation for algorithm design, and outline the desired algorithm specifications. In Section~\ref{sec.framework}, we present the framework for automated algorithm synthesis within considered algorithm class and obtain the main result. In Section~\ref{sec.numeric}, we use computational experiments to demonstrate the validity of our findings and, in Section~\ref{sec.conc}, we offer concluding remarks.

\section{Problem formulation and a class of algorithms}\label{sec.formulation}

We consider a constrained convex optimization problem,
	\begin{align}
	\label{eq.main}
	\minimize\limits_{x}~f(x)~\subjectto~Ex \, - \, q \,=\, 0
	\end{align}
where $x\in\reals{n}$ is the optimization variable, $f$: $\reals{n}\to\reals{}$ is the objective function, and $E\in\reals{d\times n}$, $q \in \reals{d}$ are parameters in the linear constraint. Without loss of generality, we assume that $E$ is a symmetric positive semidefinite matrix. This assumption is not restrictive because multiplying the equality constraint in~\eqref{eq.main} by $E^\top$ does not change the feasible set. Hence, in our analysis the constraint can be replaced with $E^\top Ex = E^\top q $ without the need to explicitly construct $E^\top E$ in our algorithm (only matrix-vector multiplication involving $E^\top$ and $E$ is required).
\vsp

\begin{assumption}\label{ass.E}
The equality constraint is homogeneous, i.e., $q = 0$. The constraint matrix $E \in\reals{n\times n}$ symmetric and positive semidefinite, i.e., $E=E^\top \succeq0$. 
\end{assumption} 

\vsp

When $q\neq 0$, the homogeneity assumption can be satisfied by writing the constraint as $E^\top E (x - \xb) =0$ where $\xb$ is a feasible solution, and then replacing $x$ in~\eqref{eq.main} with  $x + \xb$. 


We also make the following assumption on the objective function, which is standard in the design of linearly convergent algorithms.  

\vsp

\begin{assumption}\label{ass.str}
The objective function $f$ in~\eqref{eq.main} is $m$-strongly convex with an $L$-Lipschitz continuous gradient $\nabla f$.
\end{assumption} 

\vsp

In the remainder of this section, we utilize the $\cZ$-transform to represent a class of algorithms that we consider and outline design specifications.
  
\subsection{Algorithm representation}

We examine gradient-based optimization algorithms. These can be described by three transfer functions which are (potentially) parameterized by the constraint matrix $E$:
\begin{itemize}
\item $\cK_0(z,E)$ determines the mapping between the optimization variable and the input into a gradient;

\item $\cK_1(z,E)$ determines how the past iterates are used to update the optimization variable;

\item $\cK_2(z,E)$ determines how the past gradients are used to update the optimization variable.
\end{itemize}

	\vsp
	
Let $\xh(z)$ and $\gradh(z)$ be the $\cZ$-transforms of sequences of iterates $\{ x^k \}_{k=0}^\infty$ and gradients $\{ \nabla f(x^k) \}_{k=0}^\infty$. Algorithms that we consider take the following form,
\beq\label{eq.repre}
z\cK_0(z,E)\xh(z)  \,=\, \cK_1(z,E)\xh(z)   \,+\, \cK_2(z,E)\gradh(z).
\eeq
While $\cK_0(z,E)$ is the identity matrix in many cases, it plays a crucial role for accelerated methods such as Nesterov's accelerated algorithm~\cite{nes03} or two-step momentum method~\cite{vanfrelyn17}. In addition to the momentum-based accelerated methods, distributed algorithms such as EXTRA~\cite{shilinwuyin15}, DIGing~\cite{nedolsshi17}, or $\cA\cB\cN$~\cite{xinjakkha19}, can also be brought into the form given by~\eqref{eq.repre}; see~\cite{zhawulichegeo24} for additional details. 

\vsp

We next demonstrate that formulation~\eqref{eq.repre} encompasses not only centralized or distributed algorithms for unconstrained minimization but also primal-dual methods for optimization problems with equality constraints. 

\vsp

As an illustration, let us consider the gradient descent-ascent algorithm associated with~\eqref{eq.main}~\cite{algsay20},
	\bseq	
	\label{eq.gda}
	\begin{align}
	x^{k + 1}  \,&=~ x^k  \,-\, \alpha_1(\nabla f(x^k) \,+\, E^\top y^k )
	\label{eq.gda1}
	\\
	y^{k + 1}  \,&=~  y^k  \,+\,  \alpha_2 E x^k
	\label{eq.gda2}
	\end{align}
	\eseq
where $\alpha_1$ and $\alpha_2$ are the stepsizes. The $\cZ$-transform of~\eqref{eq.gda} can be used to eliminate the dual variable $\yh (z)$,
	\beq
	\left(
	z^2 I - 2 z I + I + \alpha_1 \alpha_2 E^\top E \right) \xh(z)
	\, = \,
	\alpha_1 (1 - z) \gradh (z)
	\non
	\eeq
and the division of this equation with $z$ leads to~\eqref{eq.repre} with,
\begin{align*}
\cK_0(z,E)    \,&=\, I
\\
\cK_1(z,E)  \,&=\, 2 I \,-\, (I \,+\, \alpha_1\alpha_2E^\top E ) z^{-1} 
\\
\cK_2(z,E)  \,&=\,  -\alpha_1(1 \,-\, z^{-1}) I.
\end{align*}

We note that the dual update in~\eqref{eq.gda} can be replaced by its incremental variant (also known as the alternating GDA),
	$y^{k + 1}  = y^k  + \alpha_2 E x^{k + 1}$.
This asynchronous version of a primal-dual algorithm can similarly be represented by~\eqref{eq.repre} with the only change,
$
\cK_1(z,E)  = 2I - \alpha_1\alpha_2E^\top E - z^{-1} I.
$
Furthermore, the primal-dual algorithm based on the proximal augmented Lagrangian~\cite{ozapatjovARX24} associated with~\eqref{eq.main} can be also cast within the same framework. Thus, a broad class of optimization algorithms can be represented using characterization~\eqref{eq.repre}.  


\subsection{Design specifications}
	\label{sec.specs}

We next discuss desired specifications for algorithms that we aim to design to solve optimization problem~\eqref{eq.main}. 

\vsp

\subsubsection{Explicitness} 
To design a single-loop algorithm, we need to avoid circular dependence in the evaluation of $x^{k+1}$. In other words, the computation of $x^{k+1}$ should not exploit $Ex^{k+1}$ or $\nabla f(x^{k+1})$. This requirement can be relaxed if one can afford to utilize linear system solvers~\cite{ozapilari23} or evaluation of proximal operators~\cite{wuchejovgeo24} in the iterative scheme.

\vsp

\subsubsection{Optimality}
The algorithm should asymptotically converge to a solution $\xs$ of problem~\eqref{eq.main}. Under Assumption~\ref{ass.E}, the necessary and sufficient conditions for $\xs$ are given by $\nabla f(\xs) \in\cR(E^\top)$ and $\xs \in \cN(E)$ where $\cR$ and $\cN$ are the range and null spaces of a given matrix. These inclusions can alternatively be characterized by the existence of vectors $\dds_1\in\reals{r}$ and \mbox{$\ws_2\in\reals{n-r}$ such that}
\beq\label{eq.opt}
\nabla f(\xs)  \,=\,  V_1\dds_1, ~~ \xs  \,=\,  V_2\ws_2.
\eeq
Here, $r<n$ determines the rank of the matrix $E$ with the singular value decomposition
\beq\label{eq.svd}
E \,=\, \obt{V_1}{V_2}\tbt{\Sigma}{0}{0}{0}\tbo{V_1^\top  }{V_2^\top  }
\eeq
where $\Sigma = \diag(\sig_1,\dots,\sig_r)$,
$
	\sigu
	\DefinedAs
	\sig_1 \geq
	\cdots
	\geq
	\sig_r
	\AsDefined
	\sigl > 0
$,
and the columns of matrices $V_1\in\reals{n\times r}$ and $V_2\in\reals{n\times n-r}$ form the respective orthonormal bases of $\cR(E^\top)$ and $\cN(E)$. We use $\kappa_E \DefinedAs \sigu/\sigl$ to denote the (effective) condition number of $E$ and $\kappa_f \DefinedAs L/m$ to denote the condition number of the strongly convex objective function $f$. 

\vsp

Because of Assumption~\ref{ass.str}, problem~\eqref{eq.main} has a unique solution, implying uniqueness of $\dds_1$ and $\ws_2$ as well.

\vsp

\subsubsection{Linear convergence}
For a class of optimization problems~\eqref{eq.main} under Assumptions~\ref{ass.E} and~\ref{ass.str}, we require algorithms to linearly converge at a rate of, at least, $\rho$. The linear convergence implies the existence of $\rho\in(0,1)$ and $M>0$ that are independent from the initial condition $x^0$ such that,
\beq\label{eq.linear_conv}
\norm{x^k \,-\, \xs}   \,\leq\, M\norm{x^0 \,-\, \xs} \rho^k.
\eeq 

In contrast to the existing approaches, such as~\cite{boyparryusuh24}, the above framework provides not only asymptotic convergence guarantees but also a, possibly non-conservative, lower bound on the worst-case convergence rate for algorithms that satisfy all design specifications.

	

\section{Automated algorithm synthesis}
	\label{sec.framework}
	
Herein, we formulate the algorithm design as a controller synthesis problem and provide analytical characterization of design specifications in terms of the underlying transfer functions in~\eqref{eq.repre}. By recasting control synthesis as an interpolation problem we provide a solution that leads to an implementable optimization algorithm with a guaranteed convergence rate. 

\subsection{From algorithm to controller design}

The class of optimization algorithms given by~\eqref{eq.repre} can be viewed as a Lur'e system, i.e., a feedback interconnection of an LTI system and a static nonlinear block.  The LTI system in Fig.~\ref{fig.lure} is given by $\xh(z) = \cH(z,E)\gradh(z)$, where
\beq\label{eq.io_map}
\cH(z,E)  \,\DefinedAs\, (z\cK_0(z,E) \,-\, \cK_1(z,E))^{-1}\cK_2(z,E) 
\eeq
and the static nonlinear block takes the form
$
\Delta( e^k )  \DefinedAs \nabla f(e^k  + \xs)  - \nabla f(\xs) 
$
with the error defined as $e^k \DefinedAs x^k - \xs$. Under Assumption~\ref{ass.str}, nonlinearity $\Delta$ satisfies
$m\norm{e}^2 \leq \inner{\Delta(e)}{e}  \leq L\norm{e}^2$, for all $e \in \reals{n}$ thereby implying sector-boundedness of $\Delta$ in $[m,L]$.

\vsp

In contrast to a conventional interpretation, the plant in this control system is represented by a static nonlinear block~$\Delta$. Our goal is to design a linear controller $\cH(z,E)$, which represents the optimization algorithm, to track step inputs $\tfrac{z}{z-1} \, \xs$ with unknown magnitudes $\xs$ while rejecting the step disturbances $\tfrac{z}{z-1} \, \nabla f(\xs)$ with unknown magnitudes $\nabla f(\xs)$ for all possible sector-bounded plants $\Delta$ in~$[m,L]$. The controller is parameterized by a constraint matrix $E$ of a rank $r<n$ with singular values $\sig_i\in[\sigl,\sigu]$, $i=1,\ldots,r$.  

\vsp

Before discussing design specifications, we employ a standard technique to transform the Lur'e system into a form that is convenient for analysis/synthesis.

\vsp
\begin{figure*}[t]
	\centering
	\begin{tabular}{c@{\hspace{0.2 cm}}c@{\hspace{1cm}}c@{\hspace{-0.2 cm}}c}
		\subfigure[]{\label{fig.lure}}
		&
		\subfigure[]{\label{fig.decom}}
		&
		&
		\subfigure[]{\label{fig.exp}}
		\\[-.2cm]
		\begin{tabular}{c}
			\includegraphics[width=0.29\textwidth]{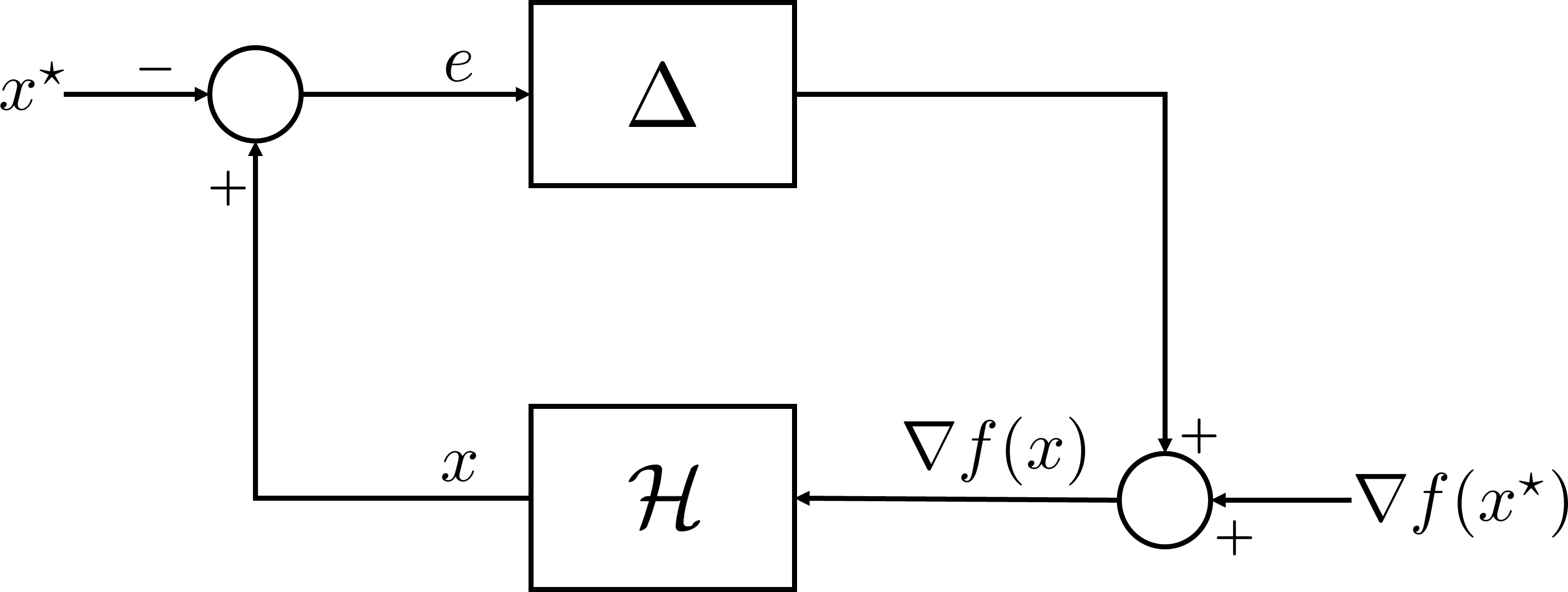}
		\end{tabular}
		&
		\begin{tabular}{c}
			\includegraphics[width=0.29\textwidth]{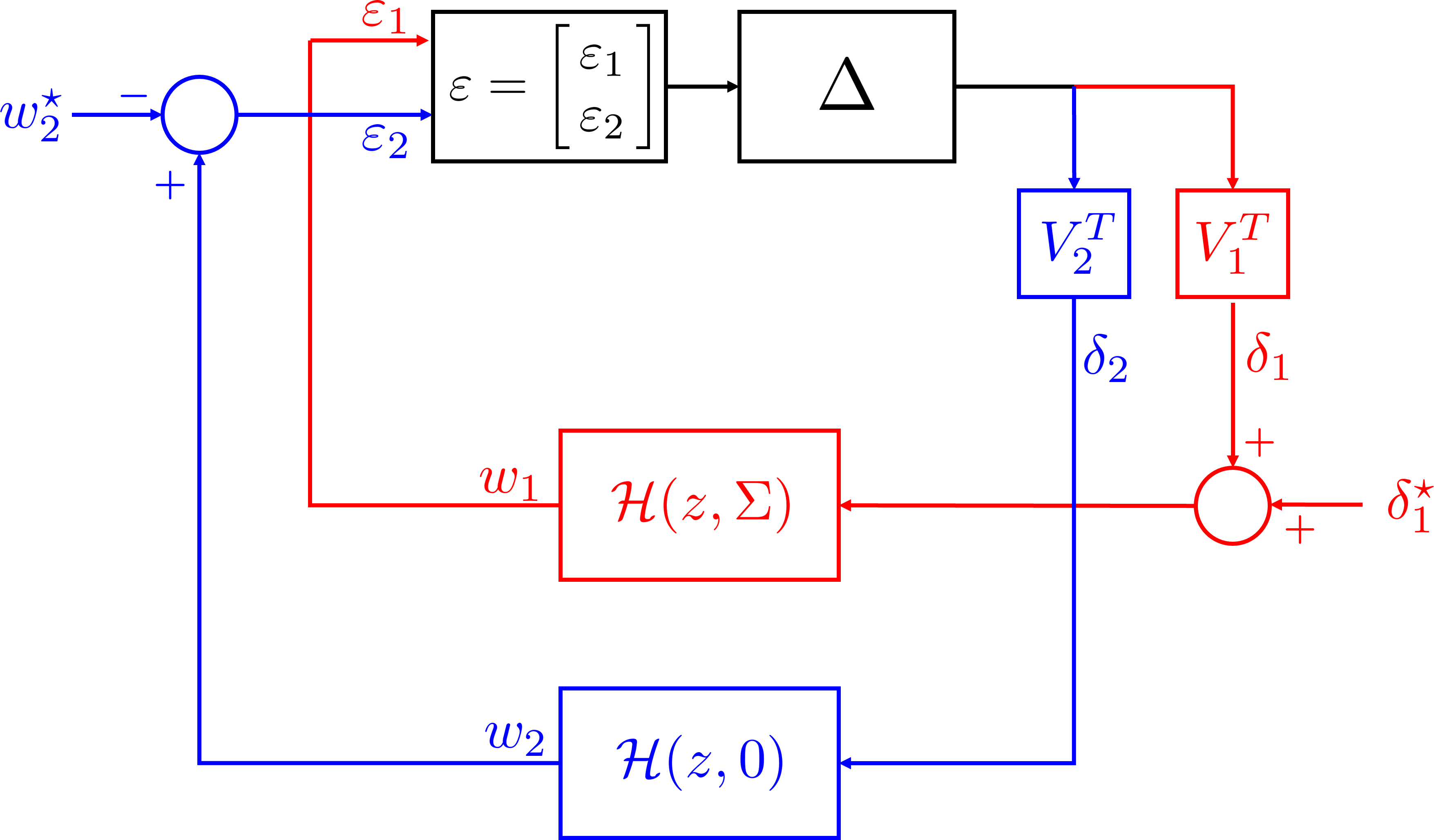}
		\end{tabular}
		&\hspace{-1cm}
		\begin{tabular}{c}
			\vspace{.25cm}
			\normalsize{\rotatebox{90}{$\log_{10}\norm{x^k \,-\, \xs}/\norm{\xs}$}}
		\end{tabular}
		&
		\begin{tabular}{c}
			\includegraphics[width=0.29\textwidth]{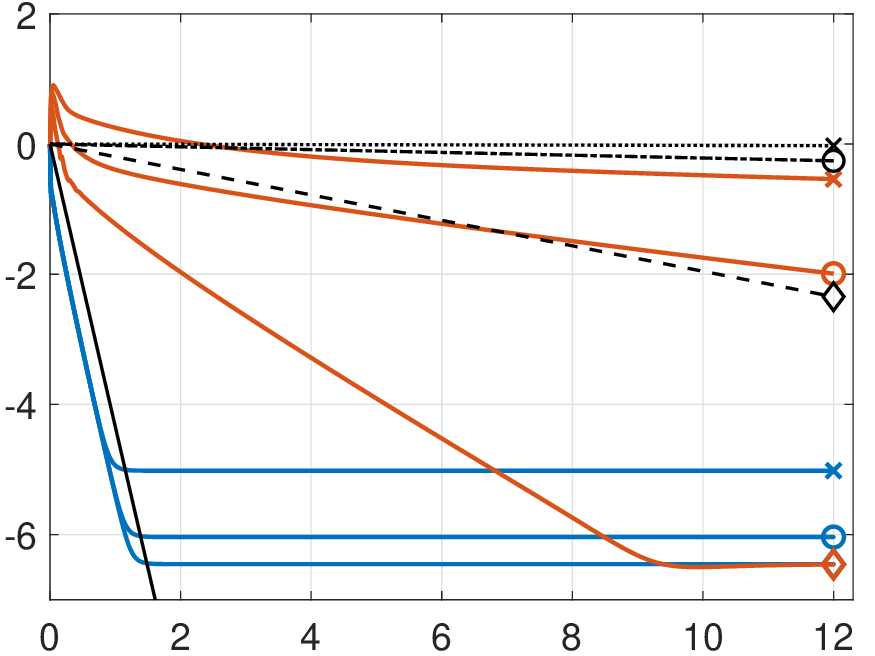}
			\\[-0.1 cm]  {$k~(\times10^4)$}
		\end{tabular}	
	\end{tabular}
	\vspace{-0.2cm}
	\caption{(a) Lur’e system representing the optimization algorithms. (b) Decomposition of
the Lur'e system into two subsystems coupled by the nonlinear element. (c) Convergence of algorithm~\eqref{eq.algo} (blue) and the incremental GDA method~\eqref{eq.gda} (red) for problem~\eqref{eq.main}. The lines with markers show convergence of algorithms in the first (diamond), second (circle), and third (cross) problem instances with the respective constraint matrices $E_1$,  $E_2$, and $E_3$. While the condition number of the objective function is set to $\kappa_f = 2000$, the (effective) condition numbers of the constraint matrices are $\kappa_{E_1} = 10$, $\kappa_{E_2} = 100$, and $\kappa_{E_3} = 1000$. The solid black line marks the theoretical convergence rate $\rho_{\mathrm{syn}}$ in~\eqref{eq.theorem.result}; the black dash, dash-dot, and dot lines show the rate $\rho_{\mathrm{gda}}$ in~\eqref{eq.gda_rate} for $i=1,2,3$, respectively.}
	\label{fig.traj}
\end{figure*}

\subsubsection{Coordinate transformation}

The change of variables using the singular vectors of $E$, 
\beq\label{eq.coordinate}
\ww  \,\DefinedAs\,  V^\top x \,=\, [V_1~V_2]^\top x \,=:\, [\ww_1^\top~\ww_2^\top]^\top   
\eeq
together with the definitions
\beq\non
\begin{split}
\epsilon  \,&=\,  V^\top e  \,=\,  \tbo{\!\!\ww_1\!\!}{\!\!\ww_2  - \ws_2\!\!},
\\
 \delta  \,&=\, V^\top \Delta(V\epsilon)  \,=\,  \tbo{\!\!\delta_1\!\!}{\!\!\delta_2\!\!},
\end{split}~~
\begin{split}
\ws \,&=\, V^\top\xs \,=\, \tbo{\!\!0\!\!}{\!\!\ws_2\!\!}
\\
\dds  \,&=\, V^\top \nabla f(\xs)  \,=\,  \tbo{\!\!\dds_1\!\!}{\!\!0\!\!}
\end{split}
\eeq
can be used to cast the transfer function in Fig.~\ref{fig.lure} as, 
\beq\non
\cH(z,E)  \,=\,  \obt{V_1}{V_2}\tbt{\cH(z, \Sigma)}{0}{0}{\cH(z, 0)}\tbo{V_1^\top  }{V_2^\top  }.
\eeq 
The resulting two linear systems are coupled by the nonlinear block, as shown in Fig.~\ref{fig.decom}. Since these subsystems are in the diagonal form, they can be further split into the scalar systems, each of which shares the same scalar transfer function parameterized by either singular values or zero, 
\beq\non
\cH(z,\Sigma)   \,=\,  \diag\big(\{h(z,\sig_i)\}_{i=1}^r\big), ~~ \cH(z,0)   \,=\,  h(z,0)I_{n-r}.
\eeq

\vsp

Our task is to design a scalar transfer function $h(z,\sigma)$ with parameter $\sigma$ taking an arbitrary value within the interval $[\sigl,\sigu]$ or being equal to zero.



\subsubsection{Loop transformation}

To make nonlinear component $\Delta$ sector-bounded in $[0,\infty)$, we apply loop transformation~\cite[Sec.\ 6.5]{kha02} to the feedback interconnection in Fig~\ref{fig.lure}. This transformation enables the use of standard passivity-based stability criteria without introducing additional conservatism. Specifically, the transformation takes the pair $(\cH, \Delta)$ with sector-bounded $\Delta$ in $[m,L]$ as input and gives the pair $(\cHb, \Deltab)$ with transformed nonlinearity $\Deltab$ being sector-bounded in $[0,\infty)$ as output. 
The one-to-one correspondence between the two transfer functions is given by
\beq\label{eq.loop}
\cH(z,E)  \,=\, (m\cHb(z,E) \,-\, LI)^{-1}(\cHb(z,E) \,-\, I).
\eeq
In contrast to $\cH$, $\cHb$ has to be a stable transfer function to guarantee stability of the transformed Lur'e system; this is because $\Deltab$ is sector-bounded in $[0,\infty)$.

\vsp

Using coordinate transformation~\eqref{eq.coordinate}, we can also diagonalize the transformed linear system,
	\beq
	\non
	\cHb(z,\Sigma)   
	\,=\,  
	\diag\big(\{\hb(z,\sig_i)\}_{i=1}^r\big), 
	~
	\cHb(z,0)   
	\,=\,  
	\hb(z,0)I_{n-r}
	\eeq
with the following one-to-one relation between $h$ and $\hb$,
\beq\label{eq.onetoone}
h(z,\sigma)  \,=\, (\hb(z,\sigma) \,-\, 1)/(m\hb(z,\sigma) \,-\, L).
\eeq 

\subsection{Analytical characterization of design specifications}

We next obtain conditions for transfer function $\hb(z,\sigma)$ to satisfy design specifications outlined in Section~\ref{sec.specs}.

\vsp

\subsubsection{Explicitness as causality}

A necessary condition for explicitness is the strict causality of the linear controller in the Lur'e system. The controller is strictly causal when transfer function $\cH(z,E)$ is strictly proper, i.e, $\cH(\infty,E) = 0$. Using the correspondence in~\eqref{eq.loop}, this is equivalent to $\cHb(\infty, E)  =  I$, and the scalar transfer function should satisfy
	\bseq
	\label{eq.interpolation}
	\beq\label{cond.causal}
	\hb(\infty,\sigma)  \,=\,  1,
	~~\forall \, \sigma \,\in\,[\sigl,\sigu]\,\cup\,\set{0}. 
	\eeq

\subsubsection{Optimality as input tracking and disturbance rejection}
The asymptotic convergence of the optimization algorithm to the optimal solution is equivalent to the asymptotic tracking of the step input $\tfrac{z}{z-1} \, \xs$ and rejection of the step disturbance $\tfrac{z}{z-1} \, \nabla f(\xs)$ in the Lur'e system. To achieve this goal, the subsystems $\cH(z,\Sigma)$ and $\cH(z,0)$ must have a blocking zero and a pole at $z=1$, respectively. This follows from internal model principle~\cite{frawon76}. For exogenous signals~\eqref{eq.opt}, Proposition~\ref{theorem.imp} establishes that these conditions are not only sufficient but also necessary for input tracking and disturbance rejection.

\vsp

\begin{proposition}\label{theorem.imp}
Let the feedback interconnection in Fig.~\ref{fig.lure} be stable for any sector bounded nonlinearity $\Delta$ in $[m,L]$. The error asymptotically converges to zero, i.e., $\lim_{k\to\infty} e^k = 0$, if and only if the transformed scalar transfer function satisfies
	\beq
	\label{cond.opt}
	\hb(1,\sig)  \,=\, \left\{
	\ba{ll}
	1, &\sig \,\in\, [\sigl,\sigu]
	\\
	L/m, &\sig  \,=\, 0.
	\ea
	\right.
	\eeq
\end{proposition}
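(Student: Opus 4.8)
The plan is to reduce \eqref{cond.opt} to a scalar internal-model statement and then to close the argument separately for the range and null subspaces. First I would use the bijection \eqref{eq.onetoone} to translate the conditions on $\hb$ into conditions on the \emph{original} scalar controller $h$: substituting $\hb(1,\sig)=1$ yields $h(1,\sig)=0$, a blocking zero at $z=1$, whereas $\hb(1,0)=L/m$ makes the denominator $m\hb-L$ vanish, i.e. $h(z,0)$ acquires a pole at $z=1$ (an integrator). Hence \eqref{cond.opt} is exactly the requirement that each range channel carry a blocking zero at $z=1$ and each null channel carry an integrator, matching the internal-model statement preceding the proposition.

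Next I would express the closed-loop error in the scalar coordinates \eqref{eq.coordinate}. Writing $\nabla f(x^k)=\Delta(e^k)+\nabla f(\xs)$ and $\hat e = \hat x - \tfrac{z}{z-1}\xs$, and using that by \eqref{eq.opt} the step reference $\xs=V_2\ws_2$ excites only the null channels while the step disturbance $\nabla f(\xs)=V_1\dds_1$ excites only the range channels, the $i$-th range channel obeys $\hat\epsilon_{1,i}=h(z,\sig_i)\big(\hat\delta_{1,i}+\tfrac{z}{z-1}\dds_{1,i}\big)$ and the $j$-th null channel obeys $\hat\epsilon_{2,j}=h(z,0)\hat\delta_{2,j}-\tfrac{z}{z-1}\ws_{2,j}$.

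For sufficiency I would argue at the forced equilibrium, which exists because the interconnection is stable and the exogenous inputs are bounded steps; let $\epsilon^\infty$ and $\delta^\infty=V^\top\Delta(V\epsilon^\infty)$ be the limits. In each range channel the blocking zero gives DC gain $h(1,\sig)=0$, so the final-value theorem returns $\epsilon_1^\infty=0$. In each null channel the integrator forces its steady-state input to vanish, $\delta_2^\infty=0$, since otherwise $\epsilon_2^k$ would ramp and contradict stability. Because $\epsilon_1^\infty=0$ implies $e^\infty=V_2\epsilon_2^\infty$, the sector bound $m\norm{e}^2\leq\inner{\Delta(e)}{e}$ evaluated at $e^\infty$ gives $m\norm{\epsilon_2^\infty}^2\leq\inner{V_2^\top\Delta(e^\infty)}{\epsilon_2^\infty}=\inner{\delta_2^\infty}{\epsilon_2^\infty}=0$, so $\epsilon_2^\infty=0$ and $e^k\to 0$.

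For necessity I would specialize $\Delta$ to the admissible linear choice $\Delta(e)=\gamma e$, $\gamma\in[m,L]$ (realized by a quadratic $f$ satisfying Assumption~\ref{ass.str}), which makes the loop LTI; the final-value theorem then yields steady-state errors proportional to $h(1,\sig)/(1-\gamma h(1,\sig))$ in the range channels and to $1/(1-\gamma h(1,0))$ in the null channels, with the denominators nonzero by stability. Requiring these to vanish for arbitrary nonzero $\dds_1$ and $\ws_2$ forces $h(1,\sig)=0$ and a pole of $h(z,0)$ at $z=1$, which is \eqref{cond.opt} through \eqref{eq.onetoone}. The step I expect to be the main obstacle is the sufficiency equilibrium analysis: one must justify convergence to a well-defined equilibrium from the stability hypothesis and, crucially, couple the integrator's zero-steady-input requirement with strong monotonicity of $\nabla f$, since it is this nonlinear coupling—rather than a purely linear sensitivity-zero cancellation—that pins the null-space error to zero.
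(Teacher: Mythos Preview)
Your proposal is correct but takes a different route from the paper in both directions.

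For \emph{necessity}, the paper stays in the loop-transformed domain: since stability forces $\cHb(z,E)$ to be analytic on the unit circle, one writes the closed loop as $(m\cHb-LI)\big((z-1)\eh+z\xs\big)=(\cHb-I)\big((z-1)\widehat\Delta+z\nabla f(\xs)\big)$, evaluates directly at $z=1$ using $e^k\to 0\Rightarrow (z-1)\eh|_{z=1}=0$ and $\Delta(0)=0$, and obtains $V_2(m\cHb(1,0)-LI)\ws_2=V_1(\cHb(1,\Sigma)-I)\dds_1$; orthogonality of $V_1,V_2$ together with arbitrariness of $(\dds_1,\ws_2)$ over the problem class then gives~\eqref{cond.opt} without ever specializing $\Delta$. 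Your linearization $\Delta(e)=\gamma e$ also works and yields explicit sensitivity formulas, but you must then handle the possible pole of $h$ at $z=1$ (your ``denominators nonzero by stability'' is really a case split on whether $h(1,0)$ is finite, combined with the fact that $1-\gamma h$ has no zero on the unit circle); the paper avoids this bookkeeping because $\cHb(1,E)$ is finite by construction.

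For \emph{sufficiency}, the paper invokes the internal model principle in a single line; your equilibrium analysis is more detailed and, importantly, makes explicit a step the paper leaves tacit: the null-channel integrator only enforces $\delta_2^\infty=0$, and it is the sector lower bound (strong convexity) that then pins $\epsilon_2^\infty=0$ via $m\norm{\epsilon_2^\infty}^2\le\inner{\delta_2^\infty}{\epsilon_2^\infty}=0$. A purely linear IMP argument cannot close this because the range and null channels are coupled through $\Delta$, so your coupling step genuinely adds content. Your self-identified obstacle---justifying convergence to a forced equilibrium from the stability hypothesis---is real, but the paper does not resolve it either; it simply cites IMP.
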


\vspace*{1ex}

\begin{proof}
See Appendix.
\end{proof}
%

\vsp

\subsubsection{Linear convergence as $\rho$-stability}
A necessary and sufficient condition for linear convergence of the synthesized algorithm is the $\rho$-stability of the Lur'e system in Fig.~\ref{fig.lure}. We use the circle criterion~\cite[Sec.\ 7.1]{kha02} to certify the $\rho$-stability. 

\vsp

The circle criterion implies $\rho$-stability of the transformed Lur'e system $(\cHb, \Deltab)$, with sector-bounded nonlinear block $\Deltab$ in $[0,\infty)$, if the scalar transfer function $h(\gamma z,\sigma)$ is {\em strictly positive real\/} for all $\gamma\in(\rho, 1]$. This can be equivalently expressed as,
\beq\label{cond.stab}
\ba{rl}
\hspace{-2ex}\mathrm{i}) \!&\!\!\! \hb(\gamma z,\sigma)\in\Rhardy
\\[0.15cm]
\hspace{-2ex}\mathrm{ii})\!&\!\!\! \re \, ( \hb(\gamma z,\sig) ) > 0, ~\forall z \in \bD^c,~\sigma \in[\sigl,\sigu]\cup\set{0}.  
\ea
\eeq
\eseq
for all $\gamma\in(\rho, 1]$, where $\Rhardy$ denotes set of real, rational, stable transfer functions,  $\re (\cdot)$ is the real part of a complex number, and $\bD\DefinedAs\set{z\in\mathbb{C}, \, |z|<1}$.

\subsection{From controller design to interpolation}
	\label{sec.NP}

Design of a scalar transfer function $\hb$ that satisfies causality~\eqref{cond.causal}, tracking~\eqref{cond.opt}, and $\rho$-stability~\eqref{cond.stab} conditions can be cast as the following interpolation problem:
\beq
\tag{P1}
\label{eq.P1}
\ba{c}
\text{Find a $\rho\in(0,1)$ and design $\hb(z,\sig)$ such that}
\\
\text{conditions in~\eqref{eq.interpolation} hold for every $\gamma\in(\rho,1].$}
\ea
\eeq 

Direct application of the Nevanlinna-Pick interpolation technique~\cite[Ch.\ 9]{doyfratan13} to problem~\eqref{eq.P1} is not possible because of conditional statement~\eqref{cond.opt} for different values of $\sig$. Hence, following~\cite{zhawulichegeo24}, we adopt a greedy approach to obtain a solution as outlined below.

\vsp

Since interpolation condition~\eqref{cond.opt} includes two cases ($\sig\in[\sigl,\sigu]$ or $\sig = 0$), we first construct a candidate solution $(\rho_1, \hb_1(z,\sig))$ to~\eqref{eq.P1} with conditions~\eqref{eq.interpolation} applied only to $\sig\in[\sigl,\sigu]$. Similarly, we obtain another candidate solution $(\rho_2, \hb_2(z))$, ensuring that conditions~\eqref{eq.interpolation} hold for $\sig = 0$. We then combine these to obtain a solution to problem~\eqref{eq.P1} by setting $\rho  =  \max(\rho_1, \rho_2)$ and defining $ \hb(z,\sig) = \hb_1(z,\sig)$ while enforcing $\hb(z,0) = \hb_2(z)$. The last step leverages the additional degrees of freedom offered by the Nevanlinna-Pick interpolation. Lemmas~\ref{lemma.first_case} and~\ref{lemma.second_case} provide a solution to problem~\eqref{eq.P1} for $\sig\in[\sigl,\sigu]$ and $\sig=0$, respectively.

\vsp

\begin{lemma}\label{lemma.first_case}
All transfer functions satisfying conditions~\eqref{eq.interpolation} for all $\sig\in[\sigl,\sigu]$  and $\gamma\in(0,1]$ are given by
\beq\label{eq.lemma.first_case}
	\hb_1(\gamma z,\sig)  \,=\,  \dfrac{z(z \,-\, \gamma) \,+\, \gb(z,\sig)(\gamma z \,-\, 1)}{z(z \,-\, \gamma) \,-\, \gb(z,\sig)(\gamma z \,-\, 1)}
\eeq
where $\gb(z,\sig)\in\Rhardy$ and $\norm{\gb(z,\sig)}_\infty<1$.
\end{lemma}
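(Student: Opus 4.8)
The plan is to recognize Problem~\eqref{eq.P1}, restricted to $\sig\in[\sigl,\sigu]$, as a two–node Nevanlinna--Pick interpolation problem for a \emph{positive real} function, and to solve it by the classical Cayley-transform-plus-Blaschke-factorization recipe. Fixing $\sig$ and $\gamma$, I would regard $F(z)\DefinedAs\hb(\gamma z,\sig)$ as the unknown transfer function of the variable $z$. The three requirements collected in~\eqref{eq.interpolation} then translate into: causality~\eqref{cond.causal} gives $F(\infty)=1$; tracking~\eqref{cond.opt} gives $F(1/\gamma)=1$, since the argument $\gamma z$ equals $1$ exactly at $z=1/\gamma$; and $\rho$-stability~\eqref{cond.stab} requires $F\in\Rhardy$ with $\re F>0$ on $\bD^c$. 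Thus $F$ must be a strictly positive real function attaining the value $1$ at the two interpolation nodes $z=\infty$ and $z=1/\gamma$, both lying in $\bD^c$.

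Next I would pass to the Schur class via the Cayley transform $\Phi\DefinedAs(F-1)/(F+1)$. Strict positive realness of $F$ on $\bD^c$ is equivalent to $\Phi$ being analytic there with $|\Phi|<1$, while the interpolation values $F=1$ become $\Phi=0$ at $z=\infty$ and $z=1/\gamma$. The core step is to factor out a Blaschke product. I would verify that
\[
B(z)\,=\,\frac{\gamma z-1}{z(z-\gamma)}
\]
is the inner function for the exterior domain $\bD^c$ whose zeros are precisely $z=1/\gamma$ and $z=\infty$: its poles $0$ and $\gamma$ lie in $\bD$, and the identity $|\gamma e^{i\theta}-1|=|e^{i\theta}-\gamma|$ shows $|B|=1$ on the unit circle, hence $|B|<1$ throughout $\bD^c$ by the maximum modulus principle. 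Since every Schur function vanishing at the zeros of $B$ divides through $B$, the admissible $\Phi$ are exactly $\Phi=B\,\gb$ with $\gb\in\Rhardy$ and $\norm{\gb}_\infty<1$, the strict bound encoding strict positive realness.

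Finally I would invert the Cayley transform, $F=(1+\Phi)/(1-\Phi)=(1+B\gb)/(1-B\gb)$, and clear the common denominator $z(z-\gamma)$ to recover exactly~\eqref{eq.lemma.first_case}. Forward verification is then immediate: as $z\to\infty$ the term $(\gamma z-1)$ is dominated by $z(z-\gamma)$, so $F\to1$; at $z=1/\gamma$ the factor $(\gamma z-1)$ vanishes, so $F=1$; and $|B\gb|<1$ on $\bD^c$ guarantees both $1-B\gb\neq0$, giving $F\in\Rhardy$, and $\re F>0$. For the converse inclusion I would set $\gb\DefinedAs\Phi/B$ and check it lies in $\Rhardy$ with $\norm{\gb}_\infty<1$, using that the zeros of $\Phi$ cancel the poles of $1/B$ at $z=1/\gamma$ and $z=\infty$ while $|B|=1$ on the boundary keeps $\norm{\gb}_\infty=\norm{\Phi}_\infty<1$.

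I expect the main obstacle to be the Blaschke-factorization step: confirming that $B$ is genuinely inner for $\bD^c$ (all-pass on $|z|=1$, modulus strictly below $1$ outside) and that the removable-singularity cancellations make $\gb=\Phi/B$ a bona fide stable Schur function in both directions. A secondary point requiring care is the boundary value $\gamma=1$, where the tracking node $1/\gamma$ collides with the unit circle and the factor $(z-\gamma)$ cancels $(\gamma z-1)$ in~\eqref{eq.lemma.first_case}; this case needs a brief limiting argument, as does aligning the strict inequality $\norm{\gb}_\infty<1$ with strict rather than non-strict positive realness.
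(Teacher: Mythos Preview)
Your proposal is correct and follows exactly the route the paper indicates: the paper's own proof consists solely of the sentence ``The proof is a direct application of Nevanlinna--Pick interpolation; see~\cite[Prop.~1]{zhawulichegeo24} for details,'' and what you outline---Cayley-transform to the Schur class, extract the two-node Blaschke product $B(z)=(\gamma z-1)/(z(z-\gamma))$ for the exterior disk, parametrize the quotient by a strict Schur function $\gb$, and transform back---is precisely that application spelled out. Your verification of the interpolation nodes, of the all-pass property of $B$, and of the forward/backward directions is accurate, and the boundary case $\gamma=1$ you flag is indeed the only place requiring a separate (trivial) check, since there the two interpolation conditions degenerate to the single node at infinity and the formula collapses to $(z+\gb)/(z-\gb)$.
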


\begin{proof}
The proof is a direct application of Nevanlinna-Pick interpolation; see~\cite[Prop.\ 1]{zhawulichegeo24} for details.
\end{proof}

\vsp

\begin{lemma}\label{lemma.second_case}
A transfer function satisfying conditions~\eqref{eq.interpolation} for $\sig=0$
and $\gamma\in(\rho,1]$ exists if and only if $\rho \geq (L-m)/(L+m)$. Furthermore, for $\rho = (L-m)/(L+m)$, the transfer function that satisfies all four conditions is unique and is given by, $\hb_2(\gamma z)  = (\gamma z + \rho)/(\gamma z - \rho)$.
\end{lemma}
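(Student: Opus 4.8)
The plan is to specialize the four conditions collected in~\eqref{eq.interpolation} to the single parameter value $\sig = 0$, recast them as a classical interpolation problem on the exterior of a disk, and then invoke Schwarz's lemma. Writing $\hb_2(z) \DefinedAs \hb(z,0)$, the causality condition~\eqref{cond.causal} and the tracking condition~\eqref{cond.opt} become the two pointwise constraints $\hb_2(\infty) = 1$ and $\hb_2(1) = L/m$, while the $\rho$-stability condition~\eqref{cond.stab} requires, for every $\gamma \in (\rho,1]$, that $\hb_2(\gamma z)$ be real, rational, stable with $\re\,\hb_2(\gamma z) > 0$ on $\bD^c$. The first step is to observe that this $\gamma$-parametrized family is equivalent to the single statement that $\hb_2$ is analytic with strictly positive real part on the exterior region $\{w : |w| > \rho\}$: any point of modulus $t > \rho$ can be written as $\gamma z$ with $\gamma \in (\rho,1]$ and $|z| \ge 1$ (take $\gamma = t$, $|z| = 1$ when $t \le 1$, and $\gamma = 1$, $|z| = t$ when $t > 1$), and conversely every such product has modulus exceeding $\rho$.

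Next I would conformally map this exterior region onto the unit disk through $\zeta = \rho/w$, which sends $w = \infty \mapsto \zeta = 0$ and $w = 1 \mapsto \zeta = \rho$. Setting $\phi(\zeta) \DefinedAs \hb_2(\rho/\zeta)$ turns the problem into finding a Carath\'eodory function (analytic on $\bD$ with positive real part) satisfying $\phi(0) = 1$ and $\phi(\rho) = L/m$. Applying the Cayley transform $\psi \DefinedAs (\phi - 1)/(\phi + 1)$, which maps the open right half-plane onto $\bD$, converts this into a Schur interpolation problem: find $\psi$ analytic on $\bD$ with $\norm{\psi}_\infty \le 1$, $\psi(0) = 0$, and $\psi(\rho) = (L-m)/(L+m)$.

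The existence claim then follows from Schwarz's lemma. Since $\psi(0) = 0$ and $|\psi| \le 1$, the bound $|\psi(\rho)| \le \rho$ must hold, i.e.\ $(L-m)/(L+m) \le \rho$; conversely, whenever $\rho \ge (L-m)/(L+m)$ a suitable linear $\psi(\zeta) = c\zeta$ is admissible, so a valid $\hb_2$ exists precisely on this range of $\rho$. For the boundary value $\rho = (L-m)/(L+m)$ the constraint $\psi(\rho) = \rho$ saturates the Schwarz bound, and the equality case forces $\psi(\zeta) = c\zeta$ with $|c| = 1$; the normalization $\psi(\rho) = \rho$ then pins down $c = 1$, so $\psi(\zeta) = \zeta$ is the unique solution. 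Inverting the two transforms gives $\phi(\zeta) = (1+\zeta)/(1-\zeta)$ and hence $\hb_2(\gamma z) = (\gamma z + \rho)/(\gamma z - \rho)$, matching the claimed expression.

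The main obstacle, and the step I would be most careful about, is the domain reduction in the first paragraph: justifying that the $\gamma$-parametrized stability and positive-realness requirements collapse exactly to analyticity with positive real part on $\{|w| > \rho\}$, including the behavior on the circle $|w| = \rho$ as $\gamma \downarrow \rho$. Since $\gamma = \rho$ is excluded from the interval $(\rho,1]$, the pole that the extremal $\hb_2$ develops at $w = \rho$ is harmless, but this exclusion must be tracked explicitly. One must also confirm that the Cayley transform introduces no spurious poles or cancellations and that strictness of $\re\,\phi > 0$ is preserved in both directions; these are routine once the domain reduction is in place.
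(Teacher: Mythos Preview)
Your argument is correct and self-contained, whereas the paper does not supply its own proof but defers to \cite[Theorem~5]{zhawulichegeo24}, which sits in the Nevanlinna--Pick framework invoked throughout Section~\ref{sec.NP}. Your reduction---collapsing the $\gamma$-parametrized stability and positivity requirements to the single statement that $\hb_2$ be analytic with positive real part on $\{|w|>\rho\}$, then pulling this back to the disk via $\zeta=\rho/w$ and the Cayley map---turns the question into the two-point Carath\'eodory problem $\phi(0)=1$, $\phi(\rho)=L/m$, for which Schwarz's lemma is exactly the Pick condition specialized to an interpolation node at the origin. The payoff of your route is that both the threshold $\rho=(L-m)/(L+m)$ and the rigidity at equality fall out of the Schwarz equality case without invoking the general interpolation machinery; the cited result delivers the same conclusion but as an instance of a larger theorem. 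The domain reduction you flag as the delicate step is indeed fine: $\bigcup_{\gamma\in(\rho,1]}\{\gamma z:|z|\ge1\}=\{w:|w|>\rho\}$, so the pole of the extremal $\hb_2$ at $w=\rho$ sits precisely on the excluded boundary circle and never enters any of the regions $|w|\ge\gamma$, and the rationality requirement in $\Rhardy$ only strengthens necessity while the constructed interpolants are manifestly real rational.
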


\vsp

\begin{proof}
See~\cite[Theorem 5]{zhawulichegeo24}.
\end{proof}

\vsp

We combine two candidate solutions as described above: we set $\hb(z,\sig) = \hb_1(z,\sig)$ and determine $\gb(z,\sig)$ in Lemma~\ref{lemma.first_case} to ensure $\hb(z,0) = \hb_2(z)$. This yields $\gb(z,0) = (\rho - z)/(\rho z - 1)$ where  $\rho = (L-m)/(L+m)$. This brings interpolation problem~\eqref{eq.P1} to:
\beq
\tag{P2}
\label{eq.P2}
\!
\ba{l}
\text{Design $g(z,\sig)$ such that}
\\[0.1cm]
\left\{
\ba{ll}
\! \gb(z,\sig)\in\hardy\text{ and }\norm{\gb(z,\sig)}_\infty  \,<\,1, & \sig \,>\,  0
\\
\! \gb(z,0)  \,=\, -(z \,-\, \rho)/(\rho z \,-\, 1) , & \sig  \,=\,  0.
\ea 
\right.
\ea
\eeq

A solution to problem~\eqref{eq.P2} is given by
\beq\label{eq.sol_p2}
\gb(z,\sig)  \,=\,  -((1\,-\,\sig)z \,-\, \rho)/(\rho z \,-\, (1\,-\,\sig))
\eeq
with the restriction $\sig\in(2/(1 + \kappa_f), 1]$. This solution is obtained by moving the pole at $1/\rho$ of $\gb(z,0)$ inside the unit circle by ensuring that $\cH_\infty$ norm of $\gb(z,\sig)$ is less \mbox{than one}.

\subsection{Algorithm synthesis}

We are now ready to cast the transfer functions obtained in Section~\ref{sec.NP} in terms of their state-space realizations, thereby leading to an implementable optimization algorithm. 


\vsp

Substitution of $\hb$ given by~\eqref{eq.lemma.first_case} into~\eqref{eq.onetoone} yields the following transfer function for $h$,
	\beq\non
	h(z,\sig)  
	\,=\, 
	\tfrac{\tfrac{2}{L \,+\, m} \, g(z,\sig)}{z \,+\, g(z,\sig)},~~ g(z,\sig)  
	\,\DefinedAs\,  
	\rho\gb(z/\rho,\sig)\tfrac{z \,-\, 1}{z \,-\, \rho^2}.
	\eeq
Furthermore, the definition of $h$ in~\eqref{eq.io_map} allows us to determine transfer functions $k_0$, $k_1$, and $k_2$: Setting $k_0(z,\sig) = 1$ gives $
	k_1(z,\sig) 
	= -g(z,\sig)
	$, and
	$
	k_2(z,\sig) 
	= 
	2g(z,\sig)/(L+m) 
	$. As a result, we have
\bseq\label{eq.k}
\begin{align}
&\cK_0(z,E) \,=\, I
\\
&\cK_1(z,E)  
	\,=\, 
	\tfrac{z - 1}{z - \rho^2} \, 
	(zI - W)^{-1}\big(Wz - \rho^2I\big)
	\label{eq.k1}
\\
&\cK_2(z,E)  \,=\, -(1 - \rho) \, \cK_1(z,E)
\label{eq.k2}
\end{align}
\eseq
where $W \DefinedAs I - E$. The remaining task is to obtain a state-space realization for the input-output map~\eqref{eq.io_map} based on the transfer functions given in~\eqref{eq.k}. 

Since $\cK_0 = I$ and $\cK_2$ is a scaled version of $\cK_1$, we have
	\beq
	z\xh (z)
	\, = \, \cK_1 (z, E) (\xh(z) \,-\, (1 \, - \, \rho) \gradh (z) )
	\eeq
Taking the inverse $\cZ$-transform of~\eqref{eq.k} yields the following recursion,
\bseq\label{eq.algo}
\begin{align}
u^k \,&=\,x^k \,-\, (1-\rho)\nabla f(x^k) 
\\
y^{k}  \,&=\,  W(y^{k-1} + u^{k}) - \rho^2 u^{k-1} 
\\
x^{k+1}   \,&=\, \rho^2 x^k \,+\, y^{k} \,-\, y^{k-1} 
\end{align}
\eseq
with the initial condition $y^{-1} = u^{-1} = 0$. 

	\vsp

\begin{theorem}\label{theorem.result}
Let Assumptions~\ref{ass.E} and~\ref{ass.str} hold. If the singular values of the constraint matrix $E$ belong to the interval $(2/\kappa_f, 1]$, where $\kappa_f = L/m$, then algorithm~\eqref{eq.algo} converges linearly to the unique solution of~\eqref{eq.main} with a rate of at least,  
	\beq\label{eq.theorem.result}
	\rho_{\mathrm{syn}}  \,=\,  1  \,-\,  2/(\kappa_f \,+\, 1).
	\eeq
\end{theorem}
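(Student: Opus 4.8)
The plan is to read Theorem~\ref{theorem.result} as the assembly of the synthesis carried out in Section~\ref{sec.NP}. The recursion~\eqref{eq.algo} is, by construction, a state-space realization of the transfer functions~\eqref{eq.k}, which were in turn obtained from the scalar $\hb(z,\sig)$ by back-substitution through~\eqref{eq.onetoone} and~\eqref{eq.io_map}. It therefore suffices to certify that the constructed $\hb$, evaluated at every admissible parameter value, satisfies the three analytic specifications—causality~\eqref{cond.causal}, optimality~\eqref{cond.opt}, and $\rho$-stability~\eqref{cond.stab}—with the \emph{single} rate $\rho=\rho_{\mathrm{syn}}$, and then to translate these back into the algorithmic guarantees. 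As a preliminary step I would record that $\rho_{\mathrm{syn}}=1-2/(\kappa_f+1)=(L-m)/(L+m)$, so the rate in~\eqref{eq.theorem.result} coincides with the minimal feasible rate furnished by Lemma~\ref{lemma.second_case}.

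First I would dispose of the null-space subsystem $\sig=0$. Lemma~\ref{lemma.second_case} gives directly that a transfer function meeting~\eqref{cond.causal}--\eqref{cond.stab} for $\sig=0$ exists precisely when $\rho\geq(L-m)/(L+m)$, and that at equality the solution $\hb_2$ is unique; since the combined design sets $\hb(z,0)=\hb_2(z)$, this subsystem is certified with rate exactly $\rho_{\mathrm{syn}}$. For each singular-value subsystem $\sig=\sig_i\in(2/\kappa_f,1]$ I would then examine the hypotheses of Lemma~\ref{lemma.first_case} for the $\gb(z,\sig)$ of~\eqref{eq.sol_p2}. The causality and optimality requirements are encoded structurally in the parameterization~\eqref{eq.lemma.first_case}—the normalization $\hb(\infty,\sig)=1$ and the interpolation node at $z=1$—and hold for any bounded $\gb$; the part that genuinely depends on $\gb$ is the $\rho$-stability. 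Membership of~\eqref{eq.sol_p2} in $\Rhardy$ reduces to placing its pole $z=(1-\sig)/\rho$ strictly inside $\bD$, which holds exactly when $\sig>1-\rho=2/(\kappa_f+1)$; the assumed interval $(2/\kappa_f,1]\subset(2/(1+\kappa_f),1]$ secures this with a uniform margin over all admissible $\sig_i$.

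The step I expect to be the main obstacle is precisely this $\rho$-stability certification for the $\sig>0$ subsystems at the boundary $\gamma=1$. The $\gb(z,\sig)$ in~\eqref{eq.sol_p2} is \emph{all-pass}, with unit modulus on $|z|=1$, so the contractivity demanded by Lemma~\ref{lemma.first_case} is only marginal there and the strict inequality in~\eqref{cond.stab}, namely $\re(\hb(\gamma z,\sig))>0$ for all $z\in\bD^c$, cannot be read off from a naive $\norm{\gb}_\infty<1$ bound. I would argue it instead by exploiting the contraction $\gamma<1$ together with the strict pole margin guaranteed by $\sig>2/\kappa_f$: passing to the argument $\gamma z$ moves the relevant poles strictly inside $\bD$ uniformly over $\gamma\in(\rho,1]$ and over all admissible $\sig$, which upgrades the marginal positivity of the all-pass factor to the strict positive realness that the circle criterion requires. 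Once this uniform margin is in hand, the block-diagonal $\cHb(z,E)$ inherits all three specifications, and the established equivalences close the argument: strict properness of $\cH$ yields explicitness (a single-loop recursion with no circular dependence in $x^{k+1}$), Proposition~\ref{theorem.imp} together with stability yields asymptotic convergence to the unique solution $\xs$, and the circle criterion converts~\eqref{cond.stab} into the linear bound~\eqref{eq.linear_conv} with rate $\rho_{\mathrm{syn}}$.
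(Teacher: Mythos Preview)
Your proposal follows exactly the route the paper takes: the paper's entire proof is the single sentence that the result ``directly follows from the synthesis procedure presented in Section~\ref{sec.NP},'' and your write-up is a faithful expansion of that procedure via Lemmas~\ref{lemma.first_case} and~\ref{lemma.second_case} combined through the choice~\eqref{eq.sol_p2}. Your flagging of the all-pass boundary behavior of $\gb$ in~\eqref{eq.sol_p2} is in fact sharper than the paper's own treatment (which simply asserts $\norm{\gb}_\infty<1$); just note that your workaround should appeal to $\gamma\in(\rho,1)$ rather than $\gamma\in(\rho,1]$, since at $\gamma=1$ no contraction is available---certifying every $\gamma<1$ already delivers~\eqref{eq.linear_conv} at rate $\rho_{\mathrm{syn}}$.
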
 
\vspace*{1ex}

\begin{proof}
	The proof directly follows from the synthesis procedure presented in Section~\ref{sec.NP}.
\end{proof}
\vspace*{1ex}

	\begin{remark}
While the upper bound on the singular values can easily be satisfied by scaling the equality constraint with its largest column-sum or its trace, the lower bound requires the condition number of $E$ to be smaller than half the condition number of the objective function, $ \kappa_f/2 = L/(2m)$. 
	\end{remark}
		
	\vsp
	
	\begin{remark}
For problem~\eqref{eq.main} with a rectangular constraint matrix $E\in\reals{d\times n}$, both algorithm~\eqref{eq.algo} and incremental GDA~\eqref{eq.gda} perform only one gradient computation and one matrix-vector multiplication with $E$ and $E^\top$ per iteration. However, while~\eqref{eq.algo} has a realization with $3 n$ states, the incremental GDA has $n+d$ states. 
	\end{remark}
	
\section{Computational experiments}
	\label{sec.numeric}

We demonstrate the merits and the effectiveness of our algorithm for the quadratic objective function in~\eqref{eq.main},
	\beq
	f(x) 
	\; = \; 
	\tfrac{1}{2} \, x^\top Q x \, + \, p^\top x. 
	\non
	\eeq	
For the problem with $n = 100$ optimization variables and $d = 100$ constraints, we randomly select problem data $Q\in\reals{n\times n}$, $E \in\reals{d \times n}$, and $p \in \reals{n}$ with $q = 0$ in~\eqref{eq.main}. The eigenvalues of $Q$ are scaled such that $L = \lambda_{\max} (Q) = 2000$ and $m = \lambda_{\min} (Q) = 1$. We generate three instances of constraint matrices $E_1$, $E_2$, and $E_3$ with the corresponding singular values scaled such that the largest singular values are the same and equal to $\sig_1 = 1$, but the smallest singular values take different values $\sig_r=0.1$, $\sig_r=0.01$, and $\sig_r=0.001$, respectively, with $r = 80$. As a result, the (effective) condition numbers of the constraint matrices are determined by $\kappa_{E_1} = 10$, $\kappa_{E_2} = 100$, and $\kappa_{E_3} = 1000$.

\vsp

We perform computational experiments using our algorithm~\eqref{eq.algo} and the incremental GDA method~\eqref{eq.gda} to compare and contrast the influence of conditions numbers of constraint matrices on the corresponding convergence rates. We set all initial conditions to zero. To optimize the best known convergence rate of the GDA method~\cite{algsay20, vansimles23}, 
	\beq\label{eq.gda_rate}
	\rho_{\mathrm{gda}}  \,=\, \sqrt{1 \,-\, \tfrac{1}{\kappa_f} \! \left(\tfrac{1}{\kappa_E} 
	\,-\, 
	\tfrac{1}{\kappa^2_E}\right) }
	\eeq 
we set $\alpha_1 = (1 - \kappa_E^{-1})/L$ and $\alpha_2 = m/\sig_1$ in~\eqref{eq.gda}. As illustrated in Fig.~\ref{fig.exp}, the convergence rate of our algorithm is invariant under changes in condition number of the constraint matrix, closely aligning with the theoretical bound. In contrast, the convergence rate of the GDA algorithm significantly deteriorates as the condition number of $E$ increases. Furthermore, as suggested by the comparison of the convergence rates~\eqref{eq.theorem.result} and~\eqref{eq.gda_rate}, our algorithm converges substantially faster than the GDA method in all computational experiments.  


	\vspace*{-1ex}
\section{Concluding remarks}
	\label{sec.conc}
	
In this work, we leveraged robust control techniques for the automated synthesis of optimization algorithms. We developed a single-loop, gradient-based algorithm for solving strongly convex optimization problems with linear equality constraints. When the condition number of the constraint matrix is smaller than that of the objective function, we demonstrated that the algorithm's convergence rate depends solely on the objective function's condition number. Under this assumption, our approach offers a significant improvement over existing methods$-$such as GDA$-$whose rates depend on the product of both condition numbers. Promising future directions include relaxing the assumption on the constraint matrix, characterizing the fundamental performance limits of gradient-based methods, and extending the framework to a wider class of constrained optimization problems.

	\vspace*{-1ex}
\bibliographystyle{IEEEtran}

\begin{thebibliography}{99}
\bibitem{nes03}
Y.~Nesterov, \emph{Introductory lectures on convex optimization: {A} basic course}.\hskip 1em plus 0.5em minus 0.4em\relax New York, NY: Springer, 2003.

\bibitem{suboycan16}
W.~Su, S.~Boyd, and E.~Candes, ``A differential equation for modeling {N}esterov’s accelerated gradient method: {t}heory and insights,'' \emph{J. Mach. Learn. Res.}, vol.~17, no. 153, pp. 1--43, 2016.

\bibitem{wibwiljor16}
A.~Wibisono, A.~C. Wilson, and M.~I. Jordan, ``A variational perspective on accelerated methods in optimization,'' \emph{Proc. Natl. Acad. Sci. USA}, vol. 113, no.~47, pp. E7351--E7358, 2016.

\bibitem{hules17}
B.~Hu and L.~Lessard, ``Dissipativity theory for {N}esterov’s accelerated method,'' in \emph{Proc. Int. Conf. Mach. Learn.}, 2017, pp. 1549--1557.

\bibitem{shidujorsu21}
B.~Shi, S.~S. Du, M.~I. Jordan, and W.~J. Su, ``Understanding the acceleration phenomenon via high-resolution differential equations,'' \emph{Math. Program.}, vol. 195, no.~1, pp. 79--148, 2021.

\bibitem{muejor19}
M.~Muehlebach and M.~I. Jordan, ``A dynamical systems perspective on {N}esterov acceleration,'' in \emph{Proc. Int. Conf. Mach. Learn.}, 2019, pp. 4656--4662.

\bibitem{ozajovAUT25}
I.~K. Ozaslan and M.~R. Jovanovi{\'c}, ``Accelerated forward--backward and {D}ouglas--{R}achford splitting dynamics,'' \emph{Automatica}, vol. 175, p. 112210, 2025.

\bibitem{cornie19}
J.~Cortes and S.~Niederlander, ``Distributed coordination for nonsmooth convex optimization via saddle-point dynamics,'' \emph{J. Nonlinear Sci.}, vol.~29, pp. 1247--1272, 2019.

\bibitem{quli18}
G.~Qu and N.~Li, ``On the exponential stability of primal-dual gradient dynamics,'' \emph{IEEE Contr. Syst. Lett.}, vol.~3, no.~1, pp. 43--48, 2018.

\bibitem{ozapatjovARX24}
I.~K. Ozaslan, P.~Patrinos, and M.~R. Jovanovi{\' c}, ``Stability of primal-dual gradient flow dynamics for multi-block convex optimization problems,'' \emph{arXiv:2408.15969}, 2024.

\bibitem{lesrecpac16}
L.~Lessard, B.~Recht, and A.~Packard, ``Analysis and design of optimization algorithms via integral quadratic constraints,'' \emph{SIAM J. Control Optim.}, vol.~26, no.~1, pp. 57--95, 2016.

\bibitem{micschebe21}
S.~Michalowsky, C.~Scherer, and C.~Ebenbauer, ``Robust and structure exploiting optimisation algorithms: {A}n integral quadratic constraint approach,'' \emph{Internat. J. Control}, vol.~94, no.~11, pp. 2956--2979, 2021.

\bibitem{vanfrelyn17}
B.~Van~Scoy, R.~A. Freeman, and K.~M. Lynch, ``The fastest known globally convergent first--order method for minimizing strongly convex functions,'' \emph{IEEE Contr. Syst. Lett.}, vol.~2, no.~1, pp. 49--54, 2017.

\bibitem{schebe21}
C.~Scherer and C.~Ebenbauer, ``Convex synthesis of accelerated gradient algorithms,'' \emph{SIAM J. Control Optim.}, vol.~59, no.~6, pp. 4615--4645, 2021.

\bibitem{ugrpetsha23}
V.~Ugrinovskii, I.~R. Petersen, and I.~Shames, ``A robust control approach to asymptotic optimality of the heavy-ball method for optimization of quadratic functions,'' \emph{Automatica}, vol. 155, p. 111129, 2023.

\bibitem{zhawulichegeo24}
S.~Zhang, W.~Wu, Z.~Li, J.~Chen, and T.~T. Georgiou, ``Frequency-domain analysis of distributed optimization: {F}undamental convergence rate and optimal algorithm synthesis,'' \emph{IEEE Trans. Automat. Control}, vol.~69, no.~12, pp. 8539--8554, 2024.

\bibitem{wuzhalichegeo24}
W.~Wu, S.~Zhang, Z.~Li, J.~Chen, and T.~T. Georgiou, ``On decomposition and convergence of distributed optimization algorithms,'' in \emph{Proc. Amer. Control Conf.}, 2024, pp. 3391--3396.

\bibitem{wuchejovgeo24}
W.~Wu, J.~Chen, M.~R. Jovanovi{\'c}, and T.~T. Georgiou, ``{T}annenbaum's gain-margin optimization meets {P}olyak's heavy-ball algorithm,'' \emph{arXiv preprint arXiv:2409.19882}, 2024.

\bibitem{boyparryusuh24}
S.~Boyd, T.~Parshakova, E.~Ryu, and J.~J. Suh, ``Optimization algorithm design via electric circuits,'' \emph{Proc. Int. Conf. Adv. Neural Inf. Process. Syst.}, vol.~37, pp. 68\,013--68\,081, 2024.

\bibitem{tahgonsho23}
H.~Taha, C.~Gonzalez, and M.~Shorbagy, ``A minimization principle for incompressible fluid mechanics,'' \emph{Phys. Fluids}, vol.~35, no.~12, 2023.

\bibitem{algsay20}
S.~A. Alghunaim and A.~H. Sayed, ``Linear convergence of primal–dual gradient methods and their performance in distributed optimization,'' \emph{Automatica}, vol. 117, p. 109003, Jul. 2020.

\bibitem{vansimles23}
B.~Van~Scoy, J.~W. Simpson-Porco, and L.~Lessard, ``Automated {L}yapunov analysis of primal-dual optimization algorithms: {A}n interpolation approach,'' in \emph{Proc. IEEE Conf. Decis. Control}, 2023, pp. 1306--1311.

\bibitem{ozajovCDC23}
I.~K. Ozaslan and M.~R. Jovanovi\'c, ``Tight lower bounds on the convergence rate of primal-dual dynamics for equality constrained convex problems,'' in \emph{Proc. IEEE Conf. Decis. Control}, 2023, pp. 7318--7323.

\bibitem{shilinwuyin15}
W.~Shi, Q.~Ling, G.~Wu, and W.~Yin, ``{EXTRA: {A}n exact first-order algorithm for decentralized consensus optimization},'' \emph{SIAM J. Optim.}, vol.~25, no.~2, pp. 944--966, 2015.

\bibitem{nedolsshi17}
A.~Nedic, A.~Olshevsky, and .~Shi, ``Achieving geometric convergence for distributed optimization over time-varying graphs,'' \emph{SIAM J. Optim.}, vol.~27, no.~4, pp. 2597--2633, 2017.

\bibitem{xinjakkha19}
R.~Xin, D.~Jakovetic, and U.~A. Khan, ``Distributed {N}esterov gradient methods over arbitrary graphs,'' \emph{IEEE Signal Process. Lett.}, vol.~26, no.~8, pp. 1247--1251, 2019.

\bibitem{ozapilari23}
I.~K. Ozaslan, M.~Pilanci, and O.~Arikan, ``{M-IHS: A}n accelerated randomized preconditioning method avoiding costly matrix decompositions,'' \emph{Linear Algebra Appl.}, vol. 678, pp. 57--91, 2023.

\bibitem{kha02}
H.~K. Khalil, \emph{Nonlinear Systems}.\hskip 1em plus 0.5em minus 0.4em\relax Upper Saddle River, NJ: Prentice Hall, 2002.

\bibitem{frawon76}
B.~A. Francis and W.~M. Wonham, ``The internal model principle of control theory,'' \emph{Automatica}, vol.~12, no.~5, pp. 457--465, 1976.

\bibitem{doyfratan13}
J.~C. Doyle, B.~A. Francis, and A.~R. Tannenbaum, \emph{{Feedback Control Theory}}.\hskip 1em plus 0.5em minus 0.4em\relax New York, NY: Macmillan, 1992.

\end{thebibliography}

\appendix
\textit{Proof of Proposition~\ref{theorem.imp}:}
Sufficiency follows from the internal model principle. For necessity, we consider the following equation representing the feedback interconnection in Fig.~\ref{fig.lure},
\begin{align*}
\eh(z) \,+\, \tfrac{z}{z \,-\, 1} \, \xs  
	\; = \;
	\cH(z,E) \big( \widehat{\Delta}(z) \,+\, \tfrac{z}{z \,-\, 1} \, \nabla f(\xs) \big)
\end{align*}
where $\eh(z)$ and $\widehat{\Delta}(z)$ are the $\cZ$-transforms of the sequences $\set{e^k}_{k=0}^\infty$ and $\set{\Delta(e^k)}_{k=0}^\infty$, respectively. Relation~\eqref{eq.loop} yields
\begin{align*}
(m\cHb(z,E) \,&-\, LI)\big((z \,-\, 1)\eh  \,+\,  z\xs\big)
\\
\,&=\, (\cHb(z,E) \,-\, I)\big((z \,-\, 1)\widehat{\Delta}(z)  \,+\,  z\nabla f(\xs)\big).
\end{align*} 
By assumption, the feedback interconnection is stable for any sector-bounded nonlinearity $\Delta$ in $[m,L]$, which implies that the transfer function $\cHb(z,E)$ does not have any poles on the unit circle, i.e., $\cHb(1,E)\neq \infty$. Therefore, evaluating the above equation at $z=1$ results in
\beq\non
(m\cHb(1,E) \,-\, LI) \xs
\,=\, (\cHb(1,E) \,-\, I)\nabla f(\xs).
\eeq
Coordinate transformation~\eqref{eq.coordinate} in conjunction with optimality conditions~\eqref{eq.opt} gives
$$
V_2(m\cHb(1,0) - LI) \ws_2
\,=\, V_1(\cHb(1,\Sigma) - I)\dds_1
$$
which must hold for any $\dds_1\in\reals{r}$ and $\ws_2\in\reals{n-r}$. Hence,
\beq\non
\cHb(1,0)  \,=\, (L/m)I_{n-r},~~\cHb(1,\Sigma)  \,=\, I_r.
\eeq
\end{document}